\title[Discrete optimization via oscillations]{A Dynamical Systems Perspective on Discrete Optimization}
\author{%
 \Name{Tong Guanchun} \Email{guanchuntong@gmail.com}\\
 \addr Max Planck Institute for Intelligent Systems\\
72076 T\"{u}bingen, Germany
 \AND
 \Name{Michael Muehlebach} \Email{michael.muehlebach@tuebingen.mpg.de}\\
 \addr Max Planck Institute for Intelligent Systems\\
72076 T\"{u}bingen, Germany%
}
\begin{document}

\maketitle

\begin{abstract}%
We discuss a dynamical systems perspective on discrete optimization. Departing from the fact that many combinatorial optimization problems can be reformulated as finding low energy spin configurations in corresponding Ising models, we derive a penalized rank-two relaxation of the Ising formulation. It turns out that the associated gradient flow dynamics exactly correspond to a type of hardware solvers termed oscillator-based Ising machines. We also analyze the advantage of adding angle penalties by leveraging random rounding techniques. Therefore, our work contributes to a rigorous understanding of oscillator-based Ising machines by drawing connections to the penalty method in constrained optimization and providing a rationale for the introduction of sub-harmonic injection locking. Furthermore, we characterize a class of coupling functions between oscillators, which ensures convergence to discrete solutions. This class of coupling functions avoids explicit penalty terms or rounding schemes, which are prevalent in other formulations.
\end{abstract}

\begin{keywords}%
   coupled oscillators, synchronization, combinatorial optimization, Ising machines
\end{keywords}

\section{Introduction}


Dynamical systems and optimization algorithms are intimately related. Many physical systems, which are described by the language of dynamical systems and differential equations, invariably seek optimal configurations or trajectories that are stationary. This is known as Hamilton's principle of stationary action, which remains central in modern physics and mathematics. Similarly, optimization algorithms are iterative computational procedures, which can be naturally viewed as the motion of a physical system that gravitates towards low energy solutions.

Indeed, there is sustained interest in exploring the relation between dynamical systems and optimization. Gradient flow dynamics has been a prominent topic in this regard \citep{Brockett1988DynamicalST,Bloch1992OnTG,Helmke1994OptimizationAD,Absil2004ContinuousDS}. Over the last decade, gradient-based methods overshadow second-order methods in solving practical optimization problems involving large data-sets and over-parameterized neural networks. This has led to a vibrant research activity investigating such modern-era optimization, with dynamical systems offering a unique viewpoint to, for example, understand the acceleration phenomenon \citep{Nesterov2018LecturesOC,Su2016ADE, Wibisono16Variational, BinHu17Diss, Muehlebach2019ADS, Muehlebach2021OptimizationWM}. We are also aware of related perspectives involving saddle-point dynamics \citep{ArrowHurwicz58, Cherukuri2015SaddlePointDC, Corts2016DistributedCF, Qu2018OnTE}, feedback and adaptive control \citep{Ariyur2003RealTimeOB, Belgioioso2022OnlineFE}, and the interplay of circuit theory and monotone operators \citep{Chaffey2021MonotoneRC}. 

While the analysis of optimization algorithms from a dynamical system perspective yields many insights and gives new algorithms with favorable properties, the majority of the above work focuses on continuous optimization problems. In discrete optimization, the analogies to dynamical systems are far from obvious due to the combinatorial structure of the problem. Should the state space be discrete or continuous? Should the dynamics be discrete or continuous? No convincing answers seem to be attainable at first glance.

However, combinatorial optimization problems are ubiquitous and represent bottlenecks in many applications. For instance, with the advent of gene editing methods like CRISPR \citep{Jinek2012APD} and high-throughput sequencing technology, scientists are able to acquire an unprecedented wealth of interventional data, which has the potential for improving our understanding of gene regulatory networks and developing effective therapeutics. However, the design of effective gene knockout and knockdown experiments, which are essential for elucidating genotype-phenotype relationships, remain prohibitive due to the astronomical figure of all possible combinations of genetic interventions. Although this attracted researchers investigating experimental design for causal inference and system identification \citep{Hauser2011CharacterizationAG,AlbertoGiovanni, Ghassami2017BudgetedED, Agrawal2019ABCDStrategyBE}, the efficient solution to combinatorial problems is a key factor that prevents automatic causal discovery.

The interplays between dynamical systems and combinatorial optimization often come under the rubric of \textit{analog computing}.  In recent years, researchers found that many hard combinatorial optimization problems can be reformulated as finding low energy spin configurations of corresponding Ising models \citep{Lucas2014IsingFO}, which led to the development of hardware solvers \citep{Mohseni2022IsingMA}. These so-called post-von Neumann Ising machines have been reported to solve large-scale combinatorial optimization problems at fast speeds, while requiring little energy. Equally intriguing are the theoretical implications of these special-purpose computing architectures for our understanding of non-convex optimization problems. Even numerical simulations of Ising machines running on a conventional digital computer constitute competitive new heuristics for hard problems. By lifting the notion of  \say{algorithms} into the operational dynamics of Ising machines, we gain opportunities to apply tools from systems and control theory, statistical mechanics, and the theory of polynomials to study combinatorial optimization. 

In this paper, we restrict our scope to a type of Ising hardware solvers based on networks of coupled self-sustaining oscillators, termed \textit{oscillator-based Ising machines}, as proposed by \cite{Wang2019OIMOI}. The main idea is that the phase dynamics of coupled oscillators under the influence of sub-harmonic injection locking admits a Lyapunov function that is closely related to the Ising Hamiltonian of the coupling graph. This allows for approximations to a certain class of combinatorial optimization problems. Practically, the oscillator-based Ising machine is appealing for being implementable with standard complementary metal–oxide–semiconductor technology with the prospect of miniaturization and mass production. Theoretically, this scheme only utilizes the physical mechanism of coupled nonlinear oscillators while other approaches typically go beyond classical dynamics, for instance, by harnessing quantum phenomena \citep{Yamamoto2020CoherentIM}.

A peculiar feature of oscillator-based Ising machines is the introduction of the above mentioned sub-harmonic injection locking. Consider an autonomous oscillator with a natural frequency of $f_0$ that is perturbed by a small periodic input signal with a nearby frequency of $f_1 \approx f_0$. Injection locking describes the phenomenon when the phase response of the oscillator becomes entrained with the input signal. Thus, both the frequency and phase of the oscillator are locked to the input. Mutual injection locking is the underlying mechanism of synchronization phenomena in complex networks \citep{Drfler2014SynchronizationIC}. It is also possible for the external perturbation to have a frequency of $2f_1$, i.e., about twice as fast as the oscillator. Then, the oscillator can still be locked at the frequency $f_1$, which is the sub-harmonic of the perturbation. However, the phase response can now have two possible configurations: either it settles at $0$ (in phase) or at $\pi$ (anti-phase). It is this binarization mechanism that enables the implementation of Ising spins with the phase logic of oscillators. Throughout this article, when phases or angles of oscillators settle down at binarized states, we call this \textit{bi-stability}.

Bi-stability is also a recurring theme for the main contributions of this paper. We derive the dynamics of oscillator-based Ising machines using penalty methods from constrained optimization. This provides a solid theoretical foundation for the introduction of sub-harmonic injection locking and also enables extensions and improvements, for example by applying augmented Lagrangian approaches. In the context of max-cut problems, rank-two relaxation \citep{Burer2002RankTwoRH} is one of the most performant heuristics, and oscillator-based Ising machines, which can be viewed as rank-two relaxation with additional penalties, enjoy even superior performance \citep{Wang2019OIMOI}. We analyze this benefit of introducing penalty that promotes bi-stability from a random rounding perspective \citep{Goemans1995ImprovedAA}. It turns out that the shrinkage of the range of all possible pairwise angle differences leads to better lower bounds for the expected edges, which is directly related to the suboptimality or approximation ratio of the obtained solution. We further show that bi-stability can be achieved without explicit penalty terms by introducing a class of generalized coupling functions. The contributions of our paper are also summerized in Fig.~\ref{main-concepts}.

\begin{figure}[!h]
\label{main-concepts}
\centering

\begin{tikzpicture}

    \node at (5.5,2) (shil) {SHIL};
    \node at (1.5,0) (constraint) {angle constraint};
    \node at (1.5, 2) (penalty) {angle penalty};
    \node at (9, 2) (g-coupling) {generalized coupling};
    \node at (5.5, 0) (bistability) {bi-stability};
    \node at (12, 0) (rounding) {random rounding};
     \draw (constraint) -- (bistability) node [midway, above, sloped] (TextNode) {Sec.~\ref{angle-constraints}};
     \draw (constraint) -- (penalty) node [midway, above, sloped] (TextNode) {Sec.~\ref{bistability via penalty}};
    \draw (shil) -- (penalty) node [midway, above] (TextNode) {Sec.~\ref{bistability via penalty}};
    \draw (g-coupling) -- (bistability) node [midway, above,sloped] (TextNode) {Sec.~\ref{g-coupling}};
    \draw (g-coupling) -- (rounding) node [midway, above,sloped] (TextNode) {Sec.~\ref{g-coupling}};
    \draw (g-coupling) -- (bistability) node [midway,below,sloped] (TextNode) {};
    \draw (bistability) -- (rounding) node [midway, above,sloped] (TextNode) {Sec.~\ref{bistability-rounding}};
    \draw (g-coupling) -- (rounding) node [midway, above,sloped] (TextNode) {};
    \draw (shil) -- (bistability) node [midway, above, sloped] (TextNode) {OIM};
    

    
    \draw [<->] (shil) -- (penalty);
    \draw [->] (constraint) -- (bistability);
    \draw [->] (shil) -- (bistability);
    \draw [->] (constraint) -- (penalty);
    \draw [->] (g-coupling) -- (bistability);
    \draw [-] (g-coupling) -- (rounding);
    \draw [-] (bistability) -- (rounding);

\end{tikzpicture}
\caption{The figure illustrates the relationship among the main concepts in this article, where SHIL stands for sub-harmonic injection locking and OIM for oscillator-based Ising machines.}
\end{figure}
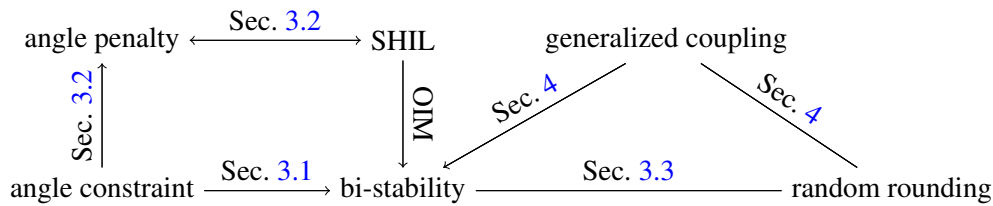

\section{Ising formulation of combinatorial optimization on the example of max-cut}

\subsection{Ising model and its applications}

The Ising model was initially conceived by Wilhelm Lenz for the study of the para/ferromagnetic phase transition and named after his student, Ernst Ising, who analyzed the one-dimensional version in his dissertation. Lenz followed the contemporary view that a magnet is made of elementary pieces that behave themselves as small magnets. He further made the assumption that the small magnets are only allowed to point in two opposite directions  \citep{LenzBeitragZV}. Ising further improved the model and included interactions among these elementary magnets \citep{IsingBeitragZT}.

More formally, we consider an undirected graph $G=(V, E)$ where $V=\{1, \ldots, n\}$ indexes the positions of the elementary magnets. Each elementary magnet $i \in V$ may take the direction $\sigma_i \in\{-1,+1\}$, called the \textit{spin} at $i$. The collection $\left(\sigma_i: i \in V\right) \in\{-1,1\}^V$ of all spins is called the \textit{spin configuration}, and models the state of the entire magnet. When there is no external magnetic field, the \textit{Hamiltonian} of the system is given by 

\begin{equation}
    \label{ising-hamiltonian}
    H(\sigma;G, J):=-\sum_{\{i, j\} \in E} J_{ij} \sigma_i \sigma_j,
\end{equation}
where for any two adjacent nodes $i, j \in V$, there is a coupling constant $J_{ij}$.

The Ising model did not develop rapidly after its introduction. In fact, Ising himself was only aware of one citation of his paper and abandoned academia after publishing the paper. Nonetheless, after over 100 years of evolution \citep{DuminilCopin2022100YO}, the model is now the cornerstone of statistical mechanics and a versatile mathematical tool for the study of cooperative phenomena in complex systems. Over the years, the Ising model has found many applications, including statistics \citep{GraphicalLauritzen,Wainwright2008GraphicalME}, neural computation \citep{Amari1972LearningPA,Hopfield1982NeuralNA}, and complex systems \citep{Harken-1983-advanced,Kuramoto1984ChemicalOW, Parisi2023NobelLM}.

This article uses the Ising model as a point of departure, due to the fact that many hard combinatorial optimization problems can be reformulated as Ising problems, i.e., finding ground states of the corresponding Ising Hamiltonian. This includes
all of Karp’s 21 NP-complete problems \citep{Karp1972}, where we will elaborate on the example of max-cut in the next section.



\subsection{Max-cut problem and its Ising formulation}

For simplicity of exposition, we focus on unweighted graphs throughout this article.
\begin{definition}
\normalfont Let $G=(V, E)$ be an unweighted undirected graph, where $V=\{1,\ldots, n\}$ denotes the vertices and $E$ the edges. A \textit{cut } of $G$ is defined by a partition $(V_1, V_2)$ of $V$, where $V_1$ is a non-empty subset of $V$, $V_1 \cup V_2=V$ and $V_1 \cap V_2=\emptyset$. The \textit{maximum cut problem} is to find a cut with a maximum number of edges connecting $V_1$ to $V_2$.
\end{definition}

Max-cut is NP-hard and serves as a canonical problem in combinatorial optimization and theoretical computer science. The reason for its significance is two-fold: on the one hand it arises from important practical applications such as circuit design, and on the other hand, the techniques originally introduced for max-cut are often adapted and repurposed for broader classes of algorithms. For instance, the seminal paper by \cite{Goemans1995ImprovedAA} gave an approximation ratio $\alpha\approx 0.878$ for max-cut. If the unique game conjecture is true \citep{Trevisan2012OnKU}, this is the best possible approximation ratio. Moreover, the introduced solution strategy, i.e., semidefinite programming and random rounding, became paradigms for the design and analysis of approximation algorithms for computationally hard problems far beyond max-cut \citep{Grtner2012ApproximationAA}. The random rounding is also instrumental for ideas presented herein, in particular for the coupling functions that achieve bi-stability. We conclude the section by demonstrating how max-cut can be formulated as minimizing an Ising Hamiltonian.

\begin{proposition}
\normalfont Given an unweighted undirected graph $G=(V, E)$ with $V=\{1, \ldots, n\}$ and adjacency matrix $A$. Finding the max cut in $G$ is equivalent to solving the following Ising problem:

\begin{equation}
\label{eq:ising-maxcut}
\begin{array}{ccc}
&\operatorname{min}  \sum\limits_{1 \leqslant i<j \leqslant n} a_{i j} \sigma_i \sigma_j  &\operatorname{s.t.}  \quad \sigma_i \in\{-1,+1\}, \quad \forall i \in\{1, \ldots, n\}.
\end{array}
\end{equation}
\end{proposition}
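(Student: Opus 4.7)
The plan is to set up an explicit correspondence between spin configurations $\sigma \in \{-1,+1\}^n$ and partitions $(V_1,V_2)$ of $V$, and then rewrite the cut size as a linear function of the Ising objective so that maximizing one is equivalent to minimizing the other. Concretely, I would assign $V_1 := \{i : \sigma_i = +1\}$ and $V_2 := \{i : \sigma_i = -1\}$, which gives a surjection from $\{-1,+1\}^n$ onto the set of (ordered) partitions of $V$; any partition in the sense of the definition corresponds to at least one such $\sigma$.

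Next I would use the key algebraic identity
\begin{equation*}
\frac{1 - \sigma_i \sigma_j}{2} \;=\; \mathbbm{1}\{\sigma_i \neq \sigma_j\},
\end{equation*}
valid for $\sigma_i, \sigma_j \in \{-1,+1\}$, so that the number of edges crossing the cut induced by $\sigma$ equals
\begin{equation*}
\mathrm{cut}(\sigma) \;=\; \sum_{\{i,j\} \in E} \frac{1-\sigma_i \sigma_j}{2} \;=\; \frac{|E|}{2} - \frac{1}{2}\sum_{1 \leq i < j \leq n} a_{ij}\, \sigma_i \sigma_j,
\end{equation*}
where I invoke that, for the unweighted graph with adjacency matrix $A$, $a_{ij} = 1$ iff $\{i,j\} \in E$ (and $a_{ii}=0$), so the edge sum matches the indexed sum in \eqref{eq:ising-maxcut}. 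Since $|E|/2$ is a constant with respect to $\sigma$, maximizing $\mathrm{cut}(\sigma)$ over $\sigma \in \{-1,+1\}^n$ is equivalent to minimizing $\sum_{i<j} a_{ij}\sigma_i \sigma_j$ over the same set.

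The only subtle point left is the requirement $V_1 \neq \emptyset$ in the definition of a cut. This is handled by observing that the two excluded spin vectors $\sigma \equiv +1$ and $\sigma \equiv -1$ (which both correspond to the trivial partition $V_1=V$, $V_2=\emptyset$, if we require $V_1 \neq \emptyset$, or its mirror) yield $\mathrm{cut}=0$. Hence any non-trivial partition with $\mathrm{cut}\geq 1$ already strictly dominates them, so removing these two points from the feasible set of \eqref{eq:ising-maxcut} does not change the optimum unless $|E|=0$, in which case the statement is vacuous. Finally, because of the symmetry $\sigma \mapsto -\sigma$ the minimizer comes in pairs mapping to the same unordered partition, which is consistent with the definition of a cut.

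I do not expect a genuine obstacle here; the argument is essentially bookkeeping around the sign identity and the partition/spin correspondence. The one place to be careful is the handling of the non-emptiness clause and the $\sigma \leftrightarrow -\sigma$ redundancy, which I would address by the case distinction above rather than by modifying the feasible set.
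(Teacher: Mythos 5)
Your proof is correct and essentially the same as the paper's: both reduce to showing that the Ising objective equals $|E| - 2\,S_{\mathrm{cut}}$, with your indicator identity $(1-\sigma_i\sigma_j)/2 = \mathbbm{1}\{\sigma_i \neq \sigma_j\}$ being just a compact rewriting of the paper's trichotomy of edges into $S_1$, $S_2$, and $S_{\mathrm{cut}}$. Your additional care with the non-emptiness clause for $V_1$ and the $\sigma \mapsto -\sigma$ redundancy is a minor tidying that the paper omits.
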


\begin{proof}
\normalfont We select the cut defined by $V_1=\left\{i \mid \sigma_i=+1\right\}$ and $V_2=\left\{i \mid \sigma_i=-1\right\}$. A dichotomy of the vertex set induces a trichotomy of the edge set, i.e., for each $\{i,j\} \in E$, exactly one of the three cases holds: both its incidental vertices are in $V_1$, both of its incidental vertices are in $V_2$, or its incidental vertices are in distinct subsets. We denote the total edge weights in the three cases by $S_1, S_2$, and $S_{\mathrm{cut}}$, respectively.
Then, the objective function can be rearranged accordingly:

\[\begin{aligned}
\sum_{1 \leqslant i<j \leqslant n} a_{i j} \sigma_i \sigma_j&=\frac{1}{2}\sum_{i \in V_1, j \in V_1} a_{i j}(+1)(+1)+\frac{1}{2}\sum_{i \in V_2, j \in V_2} a_{i j}(-1)(-1) \\
&+\frac{1}{2}\sum_{i \in V_1, j \in V_2} a_{i j}(+1)(-1)+\frac{1}{2}\sum_{i \in V_2, j \in V_1 } a_{i j}(-1)(+1)\\
&= S_1 + S_2 - S_{\mathrm{cut}}
= \sum_{1 \leqslant i<j\leqslant n} a_{i j}-2 S_{\text{cut}}.
\end{aligned}\]
The sum $\sum_{1 \leqslant i<j\leqslant n} a_{i j}$ is the total number of edges of the graph, and is therefore independent of the spin configuration. Hence minimizing the objective is equivalent to maximizing $S_{\mathrm{cut}}$. 
\end{proof}
\section{Multiple views of bi-stability: from angle constraints to injection locking}

\subsection{Geometry of the Ising formulation and the role of angle constraints}
\label{angle-constraints}

Although we obtained the Ising reformulation of max-cut in the previous section, the discrete nature of the problem remains inact. In this section, we relax the space of spins from discrete $\left(\sigma_i \in \mathbb{S}^0:=\{-1,+1\}\right)$ to continous $\left(u_i \in \mathbb{S}^1\right)$ and leverage a parametrization in polar coordinates. Finally, we will introduce constraints to enforce bi-stability.




More precisely, let  $u_i \in \mathbb{R}^2$ for each $i \in\{1, \ldots, n\}$ and let $\left\{e_1, e_2\right\} \subseteq \mathbb{R}^2$ be the standard basis, then the Ising formulation can be translated to the following:




\begin{equation}
\label{eq:dot-form}
\operatorname{min}\quad  -\sum\limits_{1 \leq i<j \leq n} J_{i j} u_i^{\top} u_j \quad \operatorname{s.t.} \quad \left\|u_i\right\|_2^2=1,\quad e_2^{\top} u_i=0, \quad \forall i \in\{1, \ldots, n\},
\end{equation}
where $\|\cdot\|_2$ denotes the Euclidean norm. The constraints in \eqref{eq:dot-form} can be interpreted as a set of \textit{\say{length constraints}} and a set of \textit{\say{angle constraints}}. Recall our historical account of the Ising model, where the introduction of angle constraints is reminiscent of Lenz's assumption on formalizing the spins. He challenged the view that elementary magnets can rotate freely within a solid and made an analogy to crystals selecting certain directions corresponding to their
symmetries. Thus, he suggested that elementary magnets align themselves with respect to their neighbours, which corresponds to either pointing in
the same or opposite direction. 

We further represent the vectors in $\mathbb{S}^1$ using polar coordinates: $u_i=\left(\cos \theta_i,\sin \theta_i\right)$, and note that $u_i^{\top} u_j=\cos \left(\theta_i-\theta_j\right)$. The set of angle constraints simplifies to $e_2^{\top} u_i=\sin \theta_i$,
whereby the length constraints are implicitly satisfied with the new representation.
Hence the optimization problem in \eqref{eq:dot-form} now reads:

\begin{equation}
\label{eq:polar}
\operatorname{min} \quad -\sum\limits_{1 \leq i<j \leq n} J_{i j} \operatorname{cos}(\theta_{i} - \theta_{j}) \quad \operatorname{s.t.} \quad
 \sin \theta_i=0, \quad \forall i \in \{1, \ldots, n\}.
\end{equation}

\subsection{Angle penalties and sub-harmonic injection locking}
\label{bistability via penalty}
The constrained problem \eqref{eq:polar} can be cast as an unconstrained optimization problem via the penalty method \citep{Bertsekas1999NonlinearP}. The penalized objective is 
\begin{equation}
\label{eq:penalized objective}
L(\theta ; J, \mu)=-\sum_{1 \leqslant i<j \leqslant n} J_{i j} \cos \left(\theta_i-\theta_j\right)+\frac{\mu}{2} \sum_{i=1}^n \sin ^2 \theta_i.
\end{equation}
We will show in the following that the gradient flow dynamics associated with the above energy function \eqref{eq:penalized objective} operates as an oscillator-based Ising machine, as described by Eq. $9$ in \cite{Wang2019OIMOI}:
\begin{equation}
    \frac{d}{d t} \theta_i(t)=-K  \sum_{j \neq i} J_{i j} \sin \left(\theta_i(t)-\theta_j(t)\right)-K_s  \sin \left(2 \theta_i(t)\right).
\end{equation}

\begin{proposition}
    \normalfont
\label{gradient flow ising machine}
The gradient system $\dot{\theta}=-\nabla L(\theta ; J, \mu)$ is equivalent to the dynamics of an oscillator-based Ising machine.

\end{proposition}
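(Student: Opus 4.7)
The plan is a direct componentwise computation of the gradient of $L(\theta;J,\mu)$ in \eqref{eq:penalized objective}, followed by matching the resulting ODE to the OIM dynamics through an appropriate identification of the constants $K$ and $K_s$.

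First, I would symmetrize the coupling term: because $J_{ij}=J_{ji}$, we have $\sum_{1\le i<j\le n} J_{ij}\cos(\theta_i-\theta_j) = \tfrac{1}{2}\sum_{i\ne j} J_{ij}\cos(\theta_i-\theta_j)$. This unordered form allows the partial derivative $\partial/\partial \theta_i$ to be taken directly, without worrying about the ordering condition. The two contributions to the derivative (arising from $\theta_i$ appearing as the first or the second argument of the cosine) combine via the oddness of $\sin$ together with $J_{ij}=J_{ji}$, cancelling the factor $\tfrac{1}{2}$ and yielding $\sum_{j\ne i} J_{ij}\sin(\theta_i-\theta_j)$ once the leading minus sign of the coupling term is accounted for.

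Second, I would differentiate the penalty $\tfrac{\mu}{2}\sum_k \sin^2\theta_k$ with respect to $\theta_i$ to obtain $\mu\sin\theta_i\cos\theta_i$, and then apply the double-angle identity to rewrite this as $\tfrac{\mu}{2}\sin(2\theta_i)$. Assembling the two pieces, the gradient flow reads
\[
\dot\theta_i \;=\; -\sum_{j\ne i} J_{ij}\sin(\theta_i-\theta_j) \;-\; \tfrac{\mu}{2}\sin(2\theta_i),
\]
which matches Eq.~9 of \cite{Wang2019OIMOI} verbatim with $K=1$ and $K_s=\mu/2$. The general positive gain $K$ in their formulation can be recovered by the time rescaling $t\mapsto Kt$ (with $\mu$ scaled accordingly), so the two dynamical systems describe the same trajectories up to a time reparametrization.

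There is no genuine obstacle in this argument; it is essentially a bookkeeping exercise in differentiation. The only step requiring a little care is verifying that the two contributions to $\partial \cos(\theta_k-\theta_l)/\partial \theta_i$ coming from the ordered sum $\sum_{k<l}$ combine correctly, via the symmetry of $J$ and the oddness of $\sin$, to produce an unrestricted sum over $j\ne i$.
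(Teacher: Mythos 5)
Your proposal is correct and follows essentially the same route as the paper: a componentwise differentiation of \eqref{eq:penalized objective} followed by matching constants via $K_s/K=\mu/2$. The only cosmetic difference is that you symmetrize the ordered sum to $\tfrac{1}{2}\sum_{i\ne j}$ before differentiating, whereas the paper splits the $j<i$ and $j>i$ contributions directly; your explicit remark that a general gain $K$ amounts to a time reparametrization is a nice touch the paper leaves implicit.
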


\begin{proof}
For $i = 1, \ldots, n$, we have
\begin{equation}
    \begin{aligned}
\frac{\partial L}{\partial \theta_i} &=-\sum_{j<i} J_{j i} \sin \left(\theta_j-\theta_i\right)+\sum_{j>i} J_{i j} \sin \left(\theta_i-\theta_j\right)+\frac{\mu}{2} \sin 2 \theta_i \\
&=\sum_{j \neq i} J_{i j} \sin \left(\theta_i-\theta_j\right)+\frac{\mu}{2} \sin 2 \theta_i.
\end{aligned}
\end{equation}
Hence we obtain the following equation for gradient flow:
\begin{equation}
    \dot{\theta}_i=-\frac{\partial L}{\partial \theta i}=-\sum_{j \neq i} J_{i j} \sin \left(\theta_i-\theta_j\right)-\frac{\mu}{2} \sin 2 \theta_i.
\end{equation}
By choosing the penalty coefficient such that $K_s/K=\mu/2$,
we recover the dynamics for an oscillator-based Ising machine.
\end{proof}

\begin{remark}
\normalfont
The above equivalent formulation of oscillator-based Ising machines is not simply a tautology. In fact, the sub-harmonic injection locking signal in Ising machines was suggested heuristically, where it is assumed that the coupling will not affect the bi-stability of the individual oscillator, which is enforced by the external input. Our derivation is based on penalty methods, which have a rigorous foundation in constrained optimization. This provides a rationale for the introduction
of sub-harmonic injection locking mechanisms and also enables extensions, for example by applying augmented Lagrangian approaches.
\end{remark}


\subsection{Why adding penalties for bi-stability? A view from random rounding}
\label{bistability-rounding}
Our above result, when specialized to the max-cut problem (with $J_{ij} = -a_{ij}$), can be seen as rank-two relaxation \citep{Burer2002RankTwoRH} with angle penalty. Although the rank-two relaxation (without penalty) is one of the most performant heuristics for max-cut according to a recent benchmark study \citep{Dunning2018WhatWB}, adding penalties is reported to achieve even better results \citep{Wang2019OIMOI}. Next, we show that the classic random rounding scheme \citep{Goemans1995ImprovedAA} can be adapted to analyze the superior performance of the penalized rank-two relaxation for max-cut.

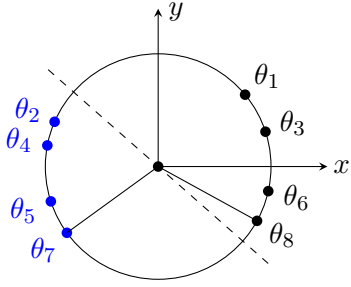
\begin{figure}[h]
 \centering
 \begin{tikzpicture}

  \coordinate (O) at (1,2);
  \def\radius{1.5cm}

  \draw (O) circle[radius=\radius];
  \fill (O) circle[radius=2pt] ;

  \pgfmathsetmacro{\angleA}{0.22*180}
  \path (O) ++(\angleA:\radius) coordinate (A);
  \pgfmathsetmacro{\angleB}{0.1*180}
  \path (O) ++(\angleB:\radius) coordinate (B);
  \pgfmathsetmacro{\angleC}{-0.07*180}
  \path (O) ++(\angleC:\radius) coordinate (C);
  \pgfmathsetmacro{\angleD}{-0.16*180}
  \path (O) ++(\angleD:\radius) coordinate (D);

   \pgfmathsetmacro{\angleE}{0.87*180}
  \path (O) ++(\angleE:\radius) coordinate (E);
  \pgfmathsetmacro{\angleF}{0.94*180}
  \path (O) ++(\angleF:\radius) coordinate (F);
  \pgfmathsetmacro{\angleG}{1.1*180}
  \path (O) ++(\angleG:\radius) coordinate (G);
  \pgfmathsetmacro{\angleH}{1.2*180}
  \path (O) ++(\angleH:\radius) coordinate (H);

    \pgfmathsetmacro{\angleI}{0.77*180}
   \path (O) ++(\angleI:1.3*\radius) coordinate (I);

    \pgfmathsetmacro{\angleJ}{1.77*180}
   \path (O) ++(\angleJ:1.3*\radius) coordinate (J);

       \pgfmathsetmacro{\angleX}{0}
   \path (O) ++(\angleX:1.5*\radius) coordinate (X);

          \pgfmathsetmacro{\angleY}{90}
   \path (O) ++(\angleY:1.4*\radius) coordinate (Y);

   \fill[black] (A) circle[radius=2pt] ++(\angleA:1em) node {$\theta_1$};
  \fill[black] (B) circle[radius=2pt] ++(\angleB:1em) node {$\theta_3$};
  \fill[black] (C) circle[radius=2pt] ++(\angleC:1em) node {$\theta_6$};
  \fill[black] (D) circle[radius=2pt] ++(\angleD:1em) node {$\theta_8$};
  
  \fill[blue] (E) circle[radius=2pt] ++(\angleE:1em) node {$\theta_2$};
  \fill[blue] (F) circle[radius=2pt] ++(\angleF:1em) node {$\theta_4$};
  \fill[blue] (G) circle[radius=2pt] ++(\angleG:1em) node {$\theta_5$};
  \fill[blue] (H) circle[radius=2pt] ++(\angleH:1em) node {$\theta_7$};

   \draw (O) -- (D);
    \draw (O) -- (H);
     \draw [-stealth](O) -- (X);
     \draw [-stealth](O) -- (Y);

      \fill[black] (X) circle[radius=0pt] ++(\angleX:0.5em) node {$x$};
      \fill[black] (Y) circle[radius=0pt] ++(\angleY:-0.1em) [anchor=west]node {$y$};
    \draw [dashed] (I) -- (J);
    
\end{tikzpicture}

\caption{The figure illustrates how random rounding works in polar coordinates: a given angle configuration is partitioned into two subsets by choosing a random line (dashed) through the origin (the normal vector is uniformly distributed on the unit circle).}
\label{fig: random rounding}
\end{figure}
Given an angle configuration $\theta=\left(\theta_1, \ldots, \theta_n\right) \in \mathbb{T}^n:=\mathbb{S}^1 \times \cdots \times \mathbb{S}^1$, the random rounding procedure gives a partition of $\theta$ by selecting a random line and assigning particles that are in different half spaces into distinct subsets $V_1$ and $V_2$, which is illustrated in Fig.~\ref{fig: random rounding}. 
Let $\mathbb{E}W_\theta$ be the expected number of edges that connect $V_1$ and $V_2$, when applying random rounding to the angle configuration $\theta$. By linearity of expectation and observing that the probability of the random line separating two points is proportional to the angle between the two points, we have
\begin{equation}
\label{eq:random-rounding expectation}
  \mathbb{E} W_\theta=\sum_{i<j} a_{i j} \mathbb{P}\left(\theta_i, \theta_j \text { in different half-spaces}\right)=\sum_{i<j} a_{i j} \frac{\left|\theta_i-\theta_j\right|_{\mathbb{S}^1}}{\pi}, 
\end{equation}
where $|\cdot|_{\mathbb{S}^1}$ denotes the shortest distance on $\mathbb{S}^1$. In the following, we are interested in relating $\mathbb{E} W_\theta$ to the max-cut value $W_{\mathrm{mc}}$, which is established by the \textit{approximation ratio}\footnote{As we will discuss later, $\alpha$ only relates to $W_{\mathrm{mc}}$ if the configuration $\theta$ is a minimizer of (\ref{eq:penalized objective}).}:
\begin{equation}
\label{eq: ratio}
   \alpha:=\min _\theta \frac{\left|\theta_i-\theta_j\right|_{\mathbb{S}^1} / \pi}{\left(1-\cos \left(\theta_i-\theta_j\right)\right) / 2}= \min _{x \in[0, \pi]} \frac{x / \pi}{(1-\cos x) / 2}.
\end{equation}
The reason that we can restrict our domain to $[0, \pi]$ is that both $|\cdot|_{\mathbb{S}^1}$ and $\cos (\cdot)$ are even functions and are symmetric about $\pi$. They are also depicted by the red line and blue curve in the right sub-plot in Fig.~\ref{fig: energy coupling and lowerbound ratio}. We use the words \textit{approximation ratio} and \textit{lower bound ratio} interchangeably in this article. The lower bound ratio can be improved due to the introduction of the angle penalty term, which forces the phases of the oscillators to cluster towards $0$ and $\pi$, where the ratio evaluates to $1$. We formally state this as follows (the proof of which is standard and follows the classic random rounding analysis):

\begin{proposition}
\label{thm:I_mu}
\normalfont
Let $\underline{\theta} \in \mathbb{T}^n$ be an angle configuration that minimizes \eqref{eq:penalized objective} with $J_{ij} = -a_{ij}$. Then, the expected number of edges for a random rounding partition satisfies

\begin{equation}
    \mathbb{E} W_{\underline{\theta}} \geqslant\left(\min _{x \in I_\mu} \frac{2}{\pi} \frac{x}{1-\cos (x)}\right)W_{\mathrm{mc}},
\end{equation}
where $I_{\mu} \subseteq [0,\pi]$ is the shrinked range of all possible pairwise angle differences (due to the angle penalty) and $W_{\mathrm{mc}}$ is the value of max-cut. 




\end{proposition}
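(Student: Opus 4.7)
The plan is to follow the Goemans--Williamson random rounding paradigm, with the added observation that the angle penalty shrinks the set of achievable pairwise differences from $[0,\pi]$ to $I_\mu$, tightening the pointwise comparison between the rounding probability and the cosine objective. I would break the proof into three ingredients: (i) the closed form (\ref{eq:random-rounding expectation}) for $\mathbb{E}W_{\underline\theta}$; (ii) a pointwise inequality between $|\underline\theta_i-\underline\theta_j|_{\mathbb{S}^1}/\pi$ and $(1-\cos(\underline\theta_i-\underline\theta_j))/2$ that holds on $I_\mu$; and (iii) a comparison of the cosine term at the minimizer $\underline\theta$ against a binary configuration achieving $W_{\mathrm{mc}}$.

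For step (ii), I would set $\alpha_\mu := \min_{x \in I_\mu} (2/\pi)\cdot x/(1-\cos x)$ and invoke the hypothesis that every $|\underline\theta_i-\underline\theta_j|_{\mathbb{S}^1}$ lies in $I_\mu$ to obtain the edgewise bound
\[
\frac{|\underline\theta_i-\underline\theta_j|_{\mathbb{S}^1}}{\pi} \;\ge\; \alpha_\mu \cdot \frac{1-\cos(\underline\theta_i-\underline\theta_j)}{2}.
\]
Summing against $a_{ij}\ge 0$ in (\ref{eq:random-rounding expectation}) then yields
\[
\mathbb{E}W_{\underline\theta} \;\ge\; \alpha_\mu \sum_{i<j} a_{ij}\,\frac{1-\cos(\underline\theta_i-\underline\theta_j)}{2}.
\]

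For step (iii), I would let $\sigma^\star \in \{-1,+1\}^n$ realise $W_{\mathrm{mc}}$ and embed it as $\theta^\star \in \{0,\pi\}^n$ via $\theta^\star_i := (\pi/2)(1-\sigma^\star_i)$, so that $\sin^2\theta^\star_i = 0$ and the penalty term vanishes. By the same rearrangement used in the max-cut/Ising equivalence proved earlier, $L(\theta^\star;-a,\mu) = \sum_{i<j} a_{ij} - 2W_{\mathrm{mc}}$. Since $\underline\theta$ minimises $L$, one has $L(\underline\theta;-a,\mu) \le L(\theta^\star;-a,\mu)$, and discarding the nonnegative penalty term on the left-hand side yields $\sum_{i<j} a_{ij}(1-\cos(\underline\theta_i-\underline\theta_j))/2 \ge W_{\mathrm{mc}}$. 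Chaining this with the bound from step (ii) produces the claimed inequality.

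The step I expect to be the most delicate is not the chain above, which is essentially Goemans--Williamson, but rather the assertion that such an $I_\mu \subsetneq [0,\pi]$ exists in the first place. The statement treats it as given, but pinning it down quantitatively requires analysing the first-order stationarity conditions of $L(\cdot;-a,\mu)$ to see how $\sin^2\underline\theta_i$ concentrates near zero as $\mu$ grows. The two limiting regimes are nevertheless reassuring: at $\mu = 0$ one recovers $I_\mu = [0,\pi]$ and the classical ratio $\alpha \approx 0.878$, while as $\mu \to \infty$ one should see $I_\mu$ collapse to a neighbourhood of $\{0,\pi\}$ on which $(2/\pi)\,x/(1-\cos x) \to 1$, matching the empirically superior performance of penalised rank-two relaxations.
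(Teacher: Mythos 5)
Your proof is correct and follows essentially the same route as the paper's: the Goemans--Williamson edgewise ratio bound restricted to $I_\mu$, followed by comparing $L(\underline{\theta})$ against the binary max-cut embedding $\theta^\star$ with $L(\theta^\star)=|E|-2W_{\mathrm{mc}}$ (the paper carries the discarded penalty term through as $\tfrac{1}{2}\left(|E|-L(\theta;\mu)\right)$ rather than dropping it directly from the minimality inequality, but the algebra is identical). Your closing observation that the existence and quantitative characterization of $I_\mu$ is assumed rather than derived is apt; the paper likewise treats $I_\mu$ as given.
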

\section{How to achieve bi-stability without penalty?}
\label{g-coupling}

As discussed above, bi-stability is a desirable property, which is reflected in the lower bound on the expected number of edges that arises from a random rounding procedure. Now we ask a more ambitious question: how can we retain the favored bi-stability while getting rid of the penalty term? It turns out that we can achieve this by defining a generalized coupling function as follows \citep[also considered by][]{Steinerberger2021MaxCutVK}:

\begin{equation}
\label{eq:energy-g}
    L(\theta;A,g)=\sum_{i<j} a_{i j} g\left(\theta_i-\theta_j\right),
\end{equation}
where $g: \mathbb{S}^1 \rightarrow[-1,1]$ is assumed (i) to be differentiable, (ii) to attain its maximum value of $1$ at $0$ and its minimum value of $-1$ at $\pi$, and (iii) is an even function. We denote the class of functions that satisfy these requirements by \textbf{$\mathcal{G}$}. For $g \in \mathcal{G}$, we conclude that  $g$ is symmetric with respect to $x=\pi$, namely, $g(x) = g(-x)= g(2\pi-x)$. Hence when there exists an edge between nodes $i$ and $j$, these properties promote that the corresponding angles $\theta_i$ and $\theta_j$ have a difference of $\pi$, so as to contribute the negative term $-a_{i j}$ to the total energy.

\begin{figure}[h]

  \centering

  \begin{tikzpicture}
   
  \draw[->] (0, 0) -- (6.6, 0) node[right] {};
  \draw[->] (0, -1.2) -- (0, 1.3) node[above] {};

\draw[dashed] (0,-1) -- (pi,-1);
  \draw[dashed] (2*pi,0) -- (2*pi,1);
   \filldraw[black] (pi,0) circle (0.018pt) node[anchor=north]{$\pi$};
    \filldraw[black] (2*pi,0) circle (0.018pt) node[anchor=north]{$2\pi$};
\filldraw[black] (0,1) circle (0.018pt) node[anchor=east]{$1$};
\filldraw[black] (0,-1) circle (0.018pt) node[anchor=east]{$-1$};
\filldraw[black] (0,0) circle (0.018pt) node[anchor=east]{$0$};

  \draw[scale=1, domain=0:2*pi, smooth, variable=\x, blue] plot ({\x}, {cos(deg(\x))});
  \draw[scale=1, domain=0:pi, smooth, variable=\x, black] plot ({\x}, {1-2*\x^2/pi^2});
  \draw[scale=1, domain=pi:2*pi, smooth, variable=\x, black] plot ({\x}, {1-2*(2*pi-\x)^2/pi^2});

\end{tikzpicture}
\begin{tikzpicture}
  \draw[->] (0, 0) -- (6.8, 0) node[right] {};
  \draw[->] (0, 0) -- (0, 2.36) node[above] {};
     \draw[dashed] (0,2) -- (2*pi,2);
  \draw[dashed] (2*pi,2) -- (2*pi,0);
   \filldraw[black] (2*pi,0) circle (0.018pt) node[anchor=north]{$\pi$};
\filldraw[black] (0,2) circle (0.018pt) node[anchor=east]{$1$};
\filldraw[black] (0,0) circle (0.018pt) node[anchor=north]{$0$};
  \draw[scale=2, domain=0:pi, smooth, variable=\x, blue] plot ({\x}, {(1-cos(deg(\x)))/2});
  \draw[scale=2, domain=0:pi, smooth, variable=\x, red] plot ({\x}, {\x/pi});
   \draw[scale=2, domain=0:pi, smooth, variable=\x, black] plot ({\x}, {\x^2/pi^2});
  
\end{tikzpicture}

\caption{The left plot shows the cosine coupling function in blue, and in black a generalized coupling $g(x)=1-2 x^2 / \pi^2$ for $x \in[0, \pi]$, which, as we shall see below, promotes bi-stability and leads to an optimal approximation ratio. The right plot shows $(1-\cos (x)) / 2$ and $(1-g(x)) / 2$, which represents the denominator that defines the approximation ratio, with corresponding colors. The red line on the right illustrates $x / \pi$, which corresponds to the numerator.}
\label{fig: energy coupling and lowerbound ratio}

\end{figure}
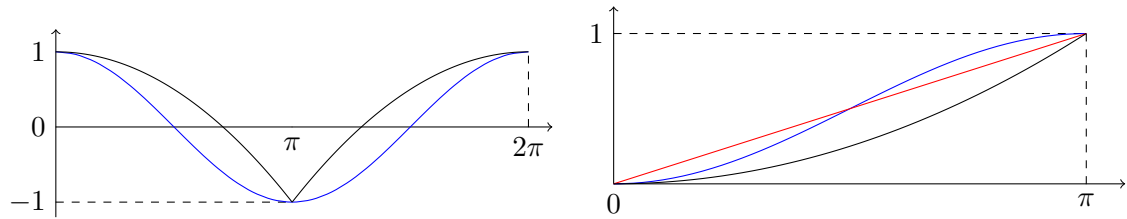

We proceed by noting that Prop.~\ref{thm:I_mu} ensures that bi-stable minimal energy configurations imply an optimal ratio for random rounding. We now ask the converse: does a coupling function with an a-priori optimal lower bound ratio lead to bi-stability? The answer is yes provided that a unique max-cut solution exists. For Erd\H{o}s R\'enyi random graphs, the existence of a unique max-cut solution is related to the number of neighbors that each vertex is connected to \citep{Ling2019OnTL}.

\begin{theorem}
\label{thm:optimal coupling}
\normalfont
Let $G=(V, E)$ be an unweighted, undirected graph and $A \in\{0,1\}^{n \times n}$ its adjacency matrix. Assume $G$ has a unique pair of max-cut solutions given by $\{V_1^*, V_2^*\}$ and its corresponding angle configuration $\theta^*:=(\theta_1^*, \ldots, \theta_n^*)\in \mathbb{T}^n$ is given by $\theta_i^*= 0$ if $ i \in V_1^* $ and  $\theta_i^*= \pi $ if $i \in V_2^*$, $i=1, \ldots, n$. Then, for the class of coupling functions $g$ satisfying
\[g \in \mathcal{G^*}=\left \{g\in \mathcal{G} \mid \min _{x \in[0, \pi]} \frac{2}{\pi} \frac{x}{1-g(x)}=1\right\},\]
the global minimum of the energy function $L(\theta ; A, g)$, as defined in \eqref{eq:energy-g}, is attained and only attained by $\theta^*$.

\end{theorem}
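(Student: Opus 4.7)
The plan is to sandwich $L(\theta;A,g)$ between a universal lower bound coming from random rounding and the explicit value attained at $\theta^*$, and then use the uniqueness of the max-cut to squeeze the minimizer. To set up the lower bound, I would first reformulate the defining condition of $\mathcal{G}^*$ as the pointwise inequality $(1-g(x))/2 \leq x/\pi$ for all $x \in [0,\pi]$, which, by evenness and $2\pi$-periodicity of $g$, extends to $(1-g(x))/2 \leq |x|_{\mathbb{S}^1}/\pi$ for every $x \in \mathbb{S}^1$, with equality exactly at the $x$'s achieving the minimum in the definition of $\mathcal{G}^*$.

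With this inequality in hand, I would combine it with the random-rounding identity \eqref{eq:random-rounding expectation} of Section~\ref{bistability-rounding} to obtain, for every $\theta \in \mathbb{T}^n$,
\[
\mathbb{E}W_\theta \;=\; \sum_{i<j} a_{ij}\, \frac{|\theta_i-\theta_j|_{\mathbb{S}^1}}{\pi} \;\geq\; \sum_{i<j} a_{ij}\, \frac{1-g(\theta_i-\theta_j)}{2} \;=\; \frac{|E|-L(\theta;A,g)}{2}.
\]
Using the deterministic bound $W_\theta \leq W_{\mathrm{mc}}$, valid for every realization of the random line (and hence for its expectation), this rearranges to $L(\theta;A,g) \geq |E|-2W_{\mathrm{mc}}$ for every configuration. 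A direct calculation at $\theta^*$, using $g(0)=1$ and $g(\pi)=-1$, shows that the $W_{\mathrm{mc}}$ cut edges contribute $-1$ each while the remaining $|E|-W_{\mathrm{mc}}$ edges contribute $+1$ each, so the bound is attained and $\theta^*$ is a global minimizer.

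For the ``only attained'' part, suppose $\theta$ is a minimizer. Then both the inequality above and $\mathbb{E}W_\theta \leq W_{\mathrm{mc}}$ must be equalities, forcing $\mathbb{E}W_\theta = W_{\mathrm{mc}}$. Combined with $W_\theta \leq W_{\mathrm{mc}}$ deterministically, this yields $W_\theta = W_{\mathrm{mc}}$ almost surely in the choice of random line. By the uniqueness of the max-cut, the rounded partition $(V_1,V_2)$ must then coincide with $\{V_1^*, V_2^*\}$ almost surely. For any two vertices $i,j$ in the same $V_k^*$, the probability of separation $|\theta_i-\theta_j|_{\mathbb{S}^1}/\pi$ must therefore vanish, forcing $\theta_i = \theta_j$; for $i \in V_1^*$, $j \in V_2^*$, the same probability equals one, forcing $|\theta_i-\theta_j|_{\mathbb{S}^1} = \pi$. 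Hence all angles in $V_1^*$ share a common value $\alpha$ and all angles in $V_2^*$ equal $\alpha+\pi$, so $\theta = \theta^* + \alpha\,\mathbf{1}$ modulo $2\pi$.

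The main obstacle I anticipate is the deduction that the rounded partition almost surely equals $\{V_1^*,V_2^*\}$, since this is precisely the step that needs the unique max-cut hypothesis: without it, $W_\theta = W_{\mathrm{mc}}$ a.s.\ would only restrict the partition to lie in some finite family of maximizing pairs, not a single one, and one could no longer conclude that the separation probabilities are $0$ or $1$ for every pair of vertices. A secondary subtlety worth flagging is that ``uniqueness'' of $\theta^*$ is necessarily modulo the rotational symmetry $\theta \mapsto \theta + c\,\mathbf{1}$, under which $L(\theta;A,g)$ is manifestly invariant.
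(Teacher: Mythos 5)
Your proof is correct and its core is the same as the paper's: the chain $\tfrac{1}{2}\left(|E|-L(\theta;A,g)\right)\leq \mathbb{E}W_\theta\leq W_{\mathrm{mc}}$ obtained from the random-rounding identity and the ratio condition defining $\mathcal{G}^*$, combined with the explicit evaluation $L(\theta^*;A,g)=|E|-2W_{\mathrm{mc}}$. The paper phrases the optimality step as a contradiction ($L(\underline{\theta})<L(\theta^*)$ would force $\mathbb{E}W_{\underline{\theta}}>W_{\mathrm{mc}}$) whereas you argue directly, but that is only a cosmetic difference. Where you genuinely diverge is the ``only attained'' part: the paper disposes of non-binarized minimizers with the one-line assertion that random rounding of any such configuration yields an expected cut strictly below $W_{\mathrm{mc}}$ because ``there will always be a sub-optimal binarized configuration with non-zero probability measure,'' which is left unjustified. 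You instead extract from $\mathbb{E}W_\theta=W_{\mathrm{mc}}$ and the deterministic bound $W_\theta\leq W_{\mathrm{mc}}$ that the rounded partition is almost surely the unique maximizer, and then read off the pairwise separation probabilities to pin down the angles; this is tighter and, importantly, it surfaces a point the paper glosses over: the energy depends only on angle differences, so the minimizer is determined only up to the global rotation $\theta\mapsto\theta+c\mathbf{1}$ (a rotated copy of $\theta^*$ is a counterexample to the paper's claim as literally worded). Your flagging of that degeneracy is a genuine correction to the statement ``only attained by $\theta^*$,'' not a flaw in your argument. One small point to make explicit if you write this up: the minimum in the definition of $\mathcal{G}^*$ is automatically attained at $x=\pi$ (where the ratio equals $1$ for every $g\in\mathcal{G}$), so the condition is indeed equivalent to the pointwise inequality $(1-g(x))/2\leq x/\pi$ on $[0,\pi]$ that your argument uses.
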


\begin{proof}
We prove the statement by contradiction and start by noting that,
for the max-cut configuration $\theta^*$, the number of expected edges obtained from the random rounding procedure is $\mathbb{E} W_{\theta^*}=W_{\mathrm{mc}}$.
For $g \in \mathcal{G^*}$ and a generic configuration $\theta:=(\theta_1, \ldots, \theta_n) \in \mathbb{T}^n$ of the energy function $L(\theta ; A, g)$, the number of expected edges obtained from the random rounding procedure is  
\begin{equation}
    \begin{aligned}
\mathbb{E} W_\theta&=\sum_{1 \leqslant i<j \leqslant n} a_{i j} \frac{\left|\theta_i-\theta_j\right|_{\mathbb{S}^1}}{\pi} =\sum_{1 \leqslant i<j \leqslant n} a_{i j} \frac{2}{\pi} \frac{\left|\theta_i-\theta_j\right|_{\mathbb{S}^1}}{1-g \left(\theta_i-\theta_j\right)} \frac{1-g \left(\theta_i-\theta_j\right)}{2} \\
&\geqslant\left(\min _{x \in [0, \pi]} \frac{2}{\pi} \frac{x}{1-g (x)}\right) \sum_{1 \leqslant i<j \leqslant n} a_{i j} \frac{1-g \left(\theta_i-\theta_j\right)}{2}=\frac{1}{2}(|E|-L(\theta ; A, g)),
\end{aligned}
\end{equation} where $|E|$ denotes the total number of edges.
Suppose there exists $\underline{\theta} \in \mathbb{T}^n$ such that
$L(\underline{\theta} ; A, g)<L\left(\theta^* ; A, g\right)$.
For such a configuration $\underline{\theta}$, the number of expected edges from random rounding procedure would be 
\[\mathbb{E} W_{\underline{\theta}}  \geqslant \frac{1}{2}(|E|-L(\underline{\theta} ; A, g)) 
 > \frac{1}{2}\left(|E|-L\left(\theta^*, A, g\right)\right) 
=W_{\mathrm{mc}},
\]
which leads to a contradiction. Thus we have $\min L(\theta ; A, g)=L\left(\theta^* ; A, g\right)$.
We proceed to prove that the lowest energy configuration is only attained by the max-cut solution. Suppose there exists a non-binarized configuration $\theta' \in \mathbb{T}^n$ with $L\left(\theta' ; A, g\right)=\min L(\theta ; A, g)=L\left(\theta^* ; A, g\right)$.
For such a $\theta'$, $\mathbb{E} W_{\theta'} \geqslant \frac{1}{2}(|E|-L(\theta' ; A, g))=W_{\mathrm{mc}}$.
This leads to a contradiction: $G$ has a unique pair of max-cut solutions,  and therefore, applying the random rounding procedure to any non-binarized configuration will give an expected cut value strictly less than $W_{\mathrm{mc}}$. This is due to the fact that there will always be a sub-optimal binarized configuration with non-zero probability measure.
\end{proof}

\begin{figure}[h]
\label{fig:phase-bistability}
  \centering

    \includegraphics[width=.49\textwidth]{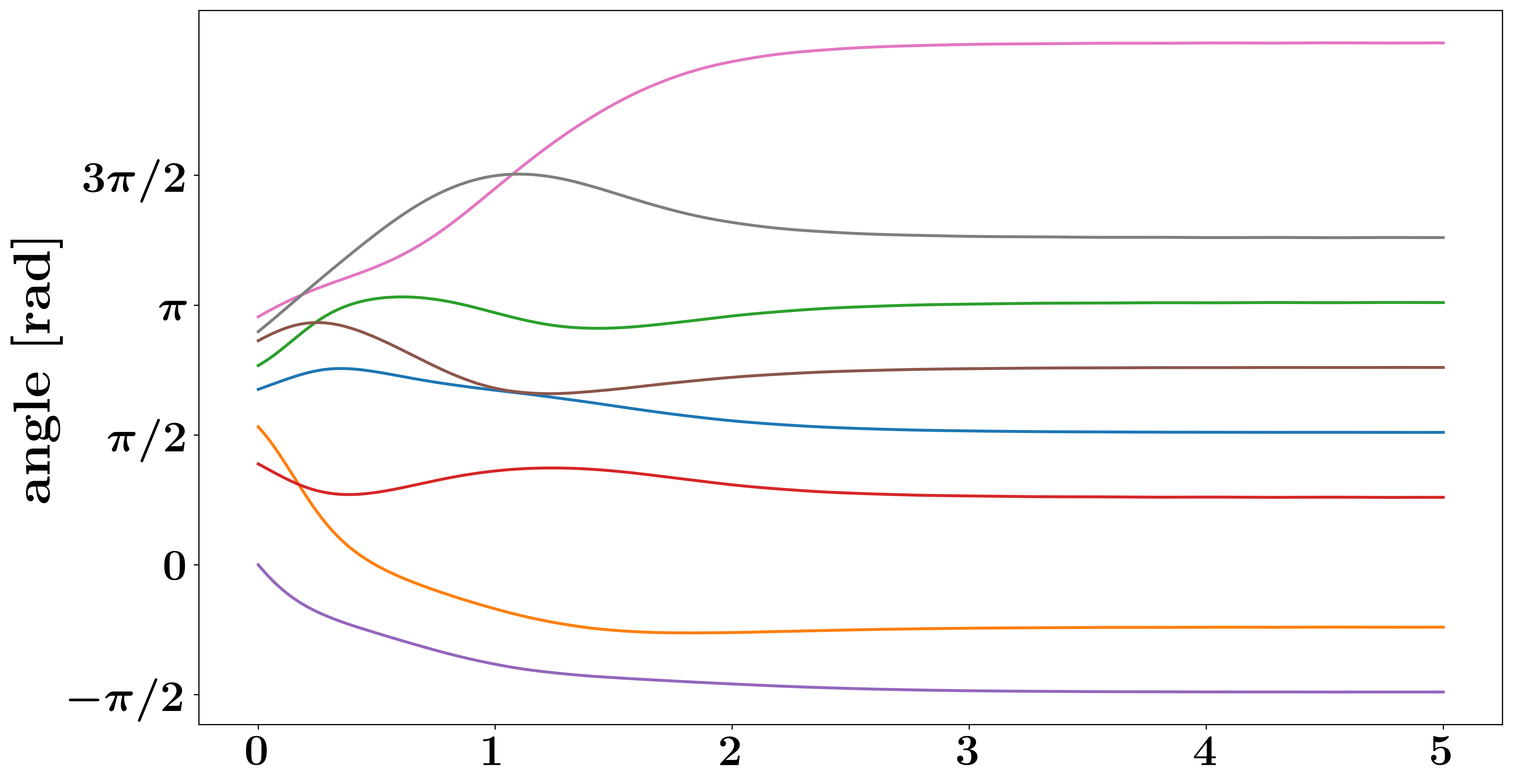}
  \includegraphics[width=.49\textwidth]{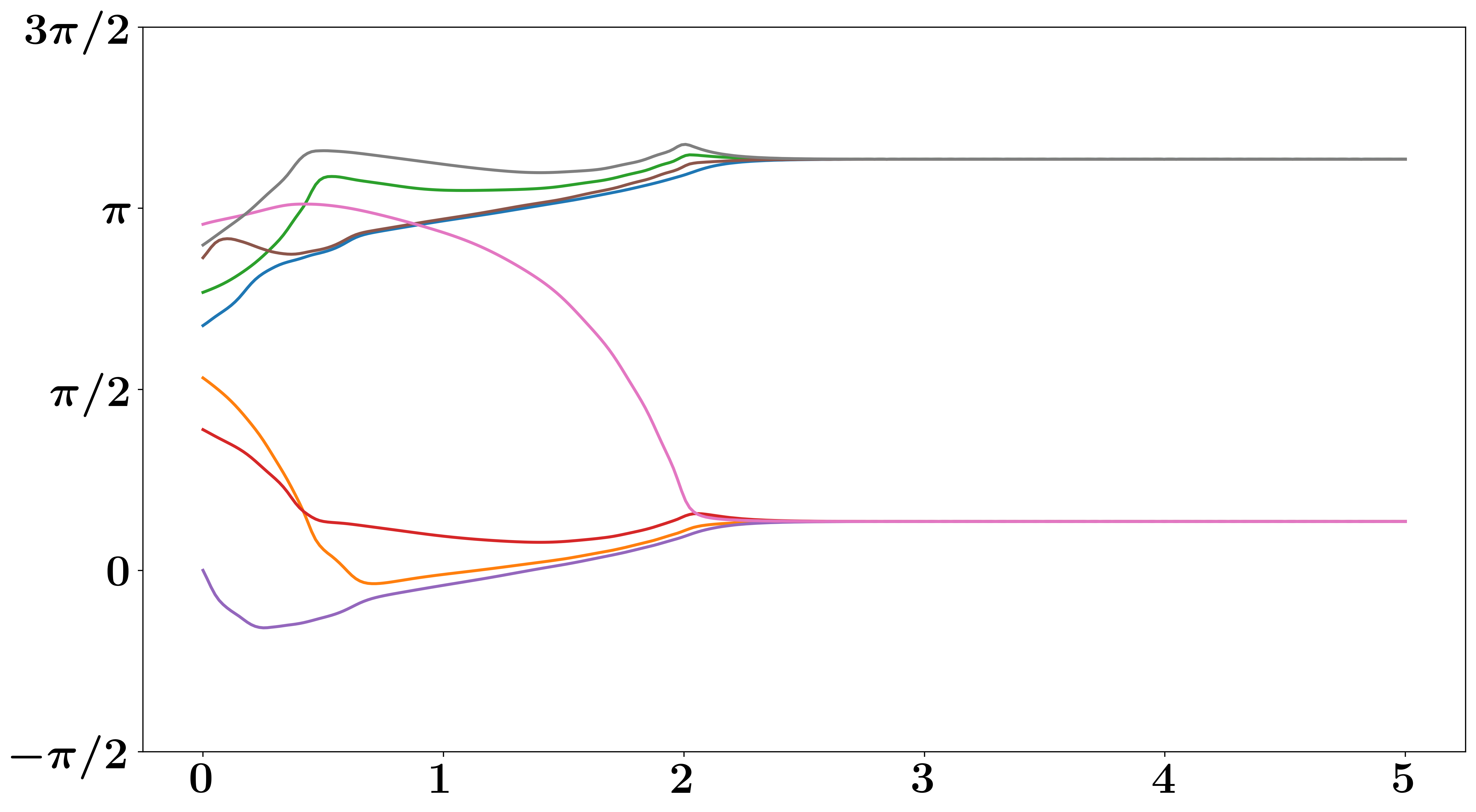}
  \includegraphics[width=.49\textwidth]{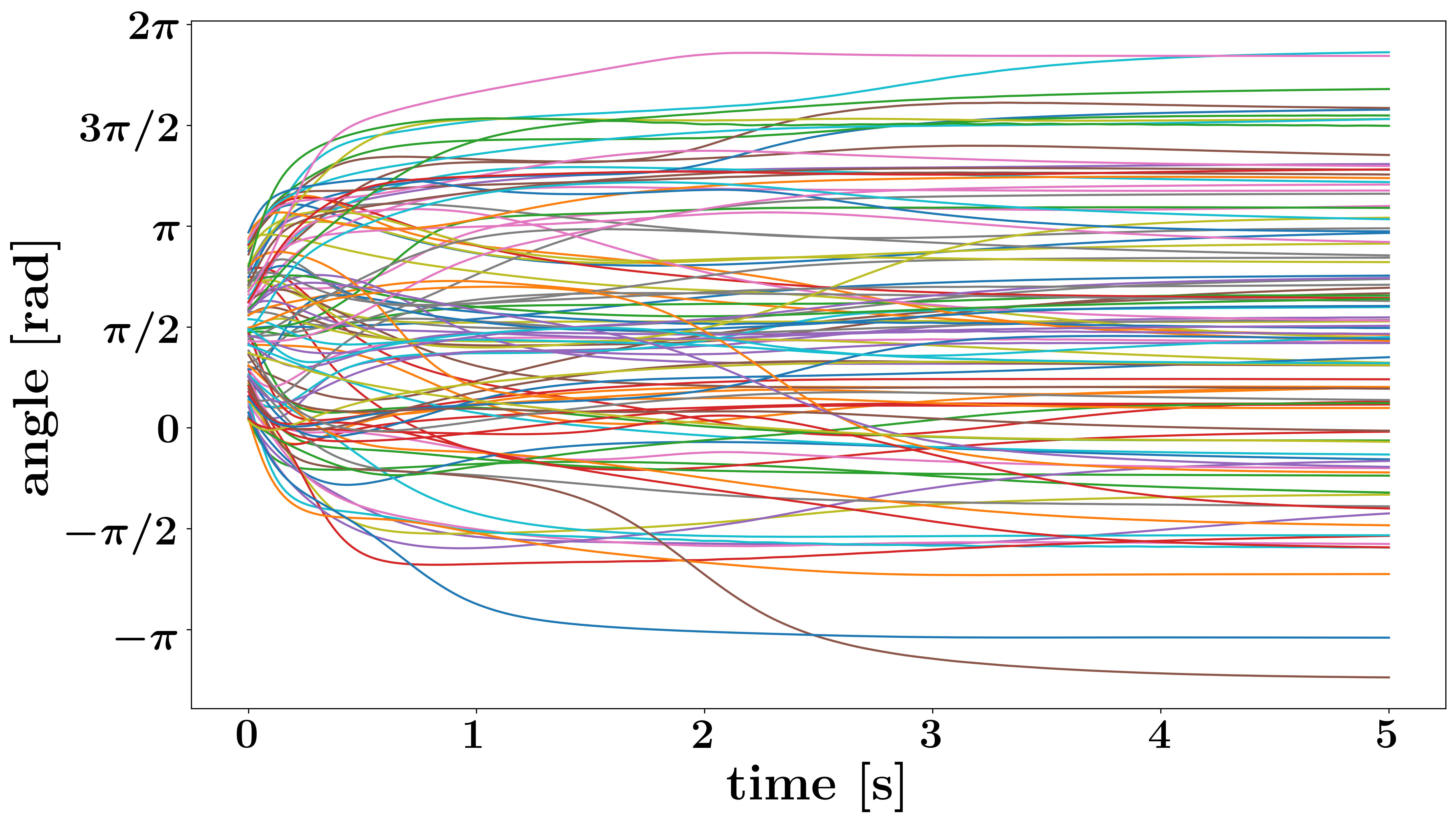}
   \includegraphics[width=.49\textwidth]{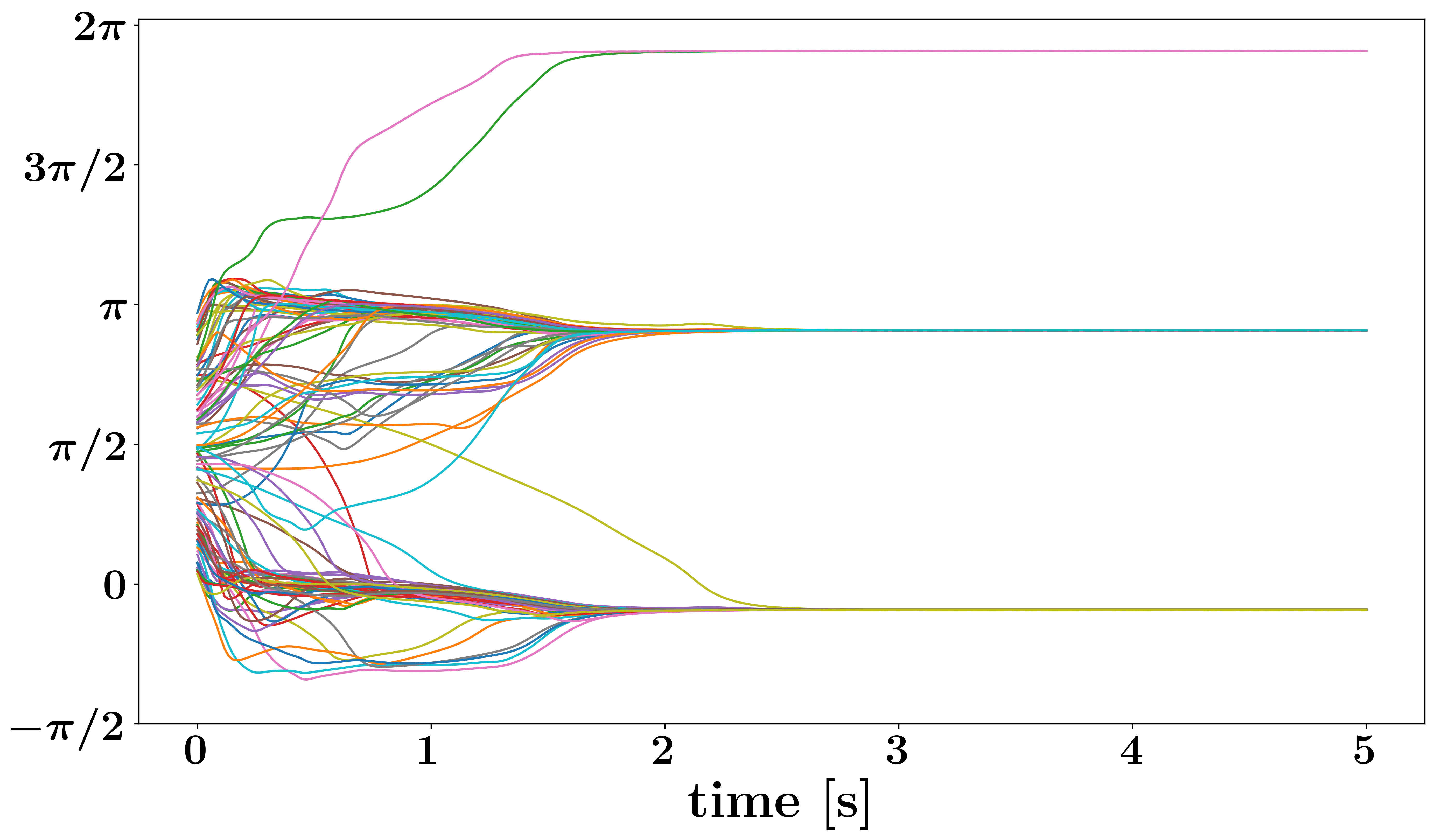}




\caption{The plot shows the evolution of different phase trajectories. The horizontal axis represents time and the vertical axis represents angle (in radians). The first row shows the trajectories resulting from solving max-cut for an $8$-node cubic graph and the second row shows results from an Erd\H{o}s R\'enyi random graph. The left two subplots are from the cosine coupling $g_1$ and the right are from the $10$-term Fourier expansion of $g_2$. In the $8$-node case, the result arising from $g_2$ achieves the global minimum, however for the $100$-node case, only a local minimum is achieved.}
\end{figure}



We conclude the section by highlighting the results of Thm.~\ref{thm:optimal coupling} in numerical simulations. The simulations are based on simulating the gradient flow dynamics of $L(\theta ; A, g)$ using the Fehlberg method (RKF45) with $10^{-3}$ for relative tolerances and $10^{-6}$ for absolute tolerances. We present phase trajectories for the $8$-node cubic graph from \cite{Wang2019OIMOI} and an Erd\H{o}s R\'enyi random graph with $100$ nodes and where each edge is included in the graph with probability $0.06$. We apply the coupling functions $g_1 = \mathrm{cos}(x)$ and $g_2=1-2 x^2 / \pi^2$, whereby we approximate $g_2$ by its $10$-term Fourier expansion. The Fourier expansion ensures differentiability, which is required for simulating the gradient flow dynamics. We note that $g_1$ achieves an approximation ratio of $0.878$, whereas $g_2$ has an approximation ratio of $1$. The numerical results support the conclusion of Thm.~\ref{thm:optimal coupling}, since the $\mathrm{cos}(\cdot)$ coupling leads to non-binarized configurations, whereas the $g_2$ coupling leads to binarized configurations (see Fig.~\ref{fig:phase-bistability}). We note, however, that while in the 8-node configuration, the global minimum is achieved with $g_2$ (and corresponds to a max-cut solution), in the 100-node configuration, only a local minimum is found with $g_2$.

\section{Conclusion}

We presented a dynamical systems perspective on discrete optimization using Ising models and coupled oscillators. We showed that our formulation based on penalty methods is equivalent to the dynamics of oscillator-based Ising machines. In this way, a rigorous rationale for the introduction of sub-harmonic injection locking mechanisms is provided. We then analyzed the advantage of introducing such bi-stability penalties from a random rounding point of view. Furthermore, even in the absence of explicit penalty terms, we characterized the class of generalized coupling functions among oscillators that ensures bi-stability. This guarantees that our dynamics converge to discrete solutions, which is also highlighted with numerical experiments. However, as we observed from the numerical results, bi-stable configuration also occurs in local minima. This reflects a limitation of our proof technique for Thm.~\ref{thm:optimal coupling}, which indicates an interesting future avenue on a constructive proof that includes the consideration for local minima and explains why bi-stability happens. 


\acks{The authors thank the German Research Foundation and the Branco Weiss Fellowship, administered by ETH Zurich, for the support.}

\newpage
\bibliography{l4dc2023-dsco.bib}

\newpage

\appendix






 


\begin{proof}[of Prop.~\ref{thm:I_mu}]
Suppose that we have a configuration $\theta=\left(\theta_1, \ldots, \theta_n\right) \in \mathbb{T}^n$ with small energy. The expected number of edges being cut by a random line can be computed according to \eqref{eq:random-rounding expectation}:
\begin{equation*}
    \mathbb{E} W_\theta=\sum_{i<j} a_{i j} \mathbb{P}\left(\theta_i, \theta_j \text { in different half-spaces}\right)=\sum_{i<j} a_{i j} \frac{\left|\theta_i-\theta_j\right|_{\mathbb{S}^1}}{\pi}.
\end{equation*}
Using \eqref{eq: ratio} and taking into account the fact that all possible pairwise angle differences now have a shrinked range $I_\mu \subseteq[0, \pi]$ due to the penalty, we have 
\begin{equation*}
    \begin{aligned}
\mathbb{E} W_\theta&=\sum_{1 \leqslant i<j \leqslant n} a_{i j}  \cdot \frac{\left|\theta_i-\theta_j\right|_{\mathbb{S}^1}}{\pi} \\ &=\sum_{1 \leqslant i<j \leqslant n} a_{i j} \cdot \frac{2}{\pi} \frac{\left|\theta_i-\theta_j\right|_{\mathbb{S}^1}}{1-\cos \left(\theta_i-\theta_j\right)} \cdot \frac{1-\cos \left(\theta_i-\theta_j\right)}{2} \\
&\geqslant\left(\min _{x \in I_\mu} \frac{2}{\pi} \frac{x}{1-\cos (x)}\right) \sum_{1 \leqslant i<j \leqslant n} a_{i j} \cdot \frac{1-\cos \left(\theta_i-\theta_j\right)}{2}\\
&\geqslant\left(\min _{x \in I_\mu} \frac{2}{\pi} \frac{x}{1-\cos (x)}\right)\left( \sum_{1 \leqslant i<j \leqslant n} a_{i j} \cdot \frac{1-\cos \left(\theta_i-\theta_j\right)}{2}-\frac{\mu}{4} \sum_{i=1}^n \sin ^2 \theta_i\right)\\
&\geqslant\left(\min _{x \in I_\mu} \frac{2}{\pi} \frac{x}{1-\cos (x)}\right)\frac{1}{2}(|E|-L(\theta ; \mu)).
\end{aligned}
\end{equation*}

Suppose the value of max-cut $W_{\mathrm{mc}}$ is attained by the partition $\left(V_1^{*}, V_2^{*}\right)$. The corresponding configuration $\theta^*=\left(\theta_1^*, \ldots, \theta_n^*\right)$ is given by, for $i=1, \ldots, n$,
\[\theta_i^*= \begin{cases}0 & \text { if } i \in V_1^* \\ \pi & \text { if } i \in V_2^*.\end{cases}\]
We now verify that
\[L\left(\theta^* ; \mu\right)=|E|-2 W_{\mathrm{mc}}.\]
Clearly, for the minimum energy configuration $\underline{\theta}\in \underset{\theta\in \mathbb{T}^n}{\arg \min } L(\theta ; \mu)$, we have
\[L\left(\underline{\theta}; \mu\right) \leqslant L\left(\theta^* ; \mu\right)=|E|-2 W_{\mathrm{mc}}.\]
Therefore the expected number of edges $\mathbb{E} W_{\underline{\theta}}$ for a random rounding partition of the minimal energy configuration $\underline{\theta}$ satisfies 
\begin{equation*}
\begin{aligned}
\mathbb{E} W_{\underline{\theta}}
&\geqslant\left(\min _{x \in I_\mu} \frac{2}{\pi} \frac{x}{1-\cos (x)}\right) \frac{1}{2}(|E|-L(\underline{\theta} ; \mu))\\
&\geqslant\left(\min _{x \in I_\mu} \frac{2}{\pi} \frac{x}{1-\cos (x)}\right)  W_{\mathrm{mc}}.
\end{aligned}
\end{equation*}
This completes the proof.
\end{proof}

\end{document}